\documentclass[11pt]{amsart} 

\usepackage{amssymb} 
\usepackage{enumerate} 
\usepackage{hyperref} 
\usepackage{tikz} 
\usepackage{xcolor} 
\usepackage{amsmath} 
\usepackage{float} 
\usepackage{mathtools}
\usepackage{tikz-cd}
\usepackage{graphicx}
\usepackage[backend=biber,style=numeric]{biblatex}
\usepackage[T1]{fontenc}
\usepackage[twoside,includefoot,footskip=25pt, margin=1.2in]{geometry}
\newcommand{\R}{\mathbb R} 

\makeatletter
\newcommand{\tpitchfork}{%
  \vbox{
    \baselineskip\z@skip
    \lineskip-.52ex
    \lineskiplimit\maxdimen
    \m@th
    \ialign{##\crcr\hidewidth\smash{$-$}\hidewidth\crcr$\pitchfork$\crcr}
  }%
}
\makeatother

\makeatletter
\renewcommand*\env@matrix[1][*\c@MaxMatrixCols c]{%
  \hskip -\arraycolsep
  \let\@ifnextchar\new@ifnextchar
  \array{#1}}
\makeatother

\DeclareMathOperator{\tb}{tb}

\theoremstyle{plain} 
\newtheorem{theorem}{Theorem}[section]
\newtheorem{lemma}[theorem]{Lemma}

\theoremstyle{definition} 
\newtheorem{definition}[theorem]{Definition} 
 
\newtheorem{example}[theorem]{Example}

\title{Legendrian Reidemeister Moves for the Convex Surface Projection}
\author[Zijian Rong]{Zijian Rong}
\address{Department of Mathematics\\University of Southern California\\Los Angeles, CA\\90007\\USA}
\email{zijianro@usc.edu}
\date{\today}

\begin{document}

\begin{abstract}
    We prove a Legendrian Reidemeister theorem for Legendrian knots in thickened convex surfaces.
\end{abstract}

\maketitle

\section{Introduction}

Let $(M, \xi)$ be a contact $3$-manifold and let $\Sigma \subset M$ be a closed surface. Then $\Sigma$ is \textit{convex} if there exists a contact vector field $X$ on $M$ transverse to $\Sigma$. Flowing along $X$ we get a contact submanifold $\Sigma \times \R \subset M$, whose contact structure is essentially determined by a set of curves $\Gamma := \{p \in \Sigma : X_p \in \xi_p\}$ so we denote it by $\xi_\Gamma$. We are interested in Legendrian knots $\Lambda \subset (\Sigma \times \R, \xi_\Gamma)$. A natural projection to take is $\pi: \Sigma \times \R \rightarrow \Sigma$ given by $(p, t) \mapsto p$, which we call the \textit{convex surface projection}. The main goal of this paper is to prove a Legendrian Reidemeister theorem for this projection. See Figure \ref{Reidemeister}.

\begin{figure}
    \begin{center}
    	\includegraphics[width=1.0\textwidth]{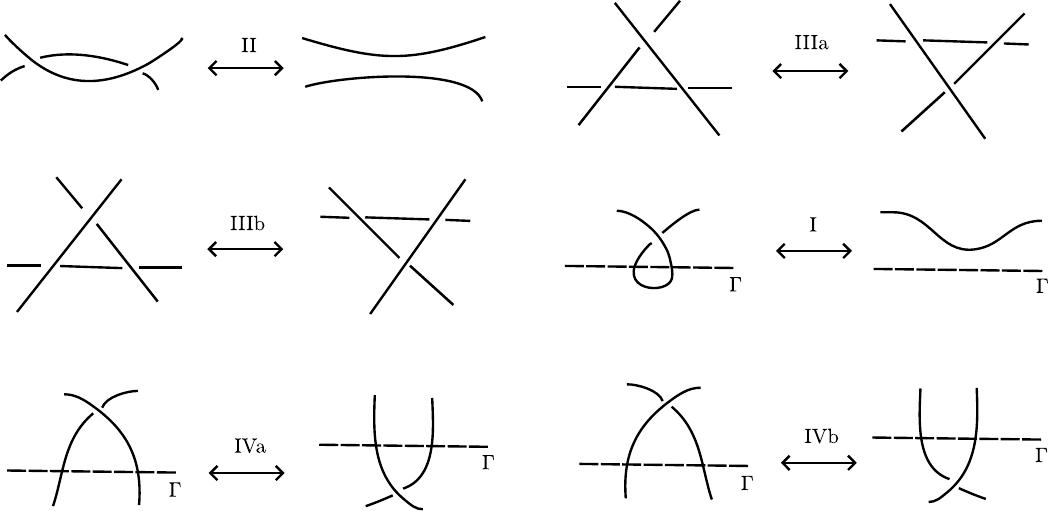}
    	\caption{Reidemeister moves for the convex surface projection}
    	\label{Reidemeister}
    \end{center}
\end{figure}

For $\R^3$ with the standard contact structure $\ker(dz - ydx)$, one can take the front projection (i.e., the $xz$-projection) and the Lagrangian projection (i.e., the $xy$-projection). There are corresponding Legendrian Reidemeister theorems for both projections, which are used to define knot invariants for Legendrian knots in $\R^3$. In particular, in \cite{chekanov2002dga} Chekanov defines the Chekanov-Eliashberg differential graded algebra, a powerful invariant that can distinguish Legendrian knots which share the same classical invariants. To show that the DGA is indeed an invariant Chekanov used the Legendrian Reidemeister theorem for the Lagrangian projection. Various other types of Legendrian Reidemeister theorems have also been proved and used later on, including \cite{sabloff2003circle} \cite{licata2011lensspaces} \cite{licata2013seifert} \cite{ekholmNg2015weinstein} \cite{Bjorklund2016} \cite{GayLicata2018} \cite{Brand2023}. In \cite{EaglesRong2026}, to define a similar DGA invariant for thickened convex surfaces Eagles and the author also used a Legendrian Reidemeister theorem for the convex surface projection, whose proof is contained in this paper, which was meant to be the first part of a project proposed by Chaidez to extend the computation of Legendrian contact homology to thickened convex surfaces.

The proof is mainly motivated by viewing the part of $\pi(\Lambda)$ away from the dividing curve $\Gamma$ as the Lagrangian projection, and treating the part of $\pi(\Lambda)$ near $\Gamma$ by switching to the projection onto $\Gamma \times \R_t \subset \Sigma \times \R_t$, which is similar to the front projection in $\R^3$.

\begin{definition}
	Let $\Lambda$ be a Legendrian knot in $(\Sigma \times \R, \xi_\Gamma)$. Then $\Lambda$ has a \textit{good} projection if $\pi(\Lambda)$ is an immersed submanifold of $\Sigma$ whose only self-intersections are transverse double intersections off $\Gamma$ and $\pi(\Lambda)$ intersects $\Gamma$ transversely.
\end{definition}

\begin{definition}
	Let $\Lambda_t$ be a smooth isotopy in $(\Sigma \times \R, \xi_\Gamma)$ between two Legendrian knots $\Lambda_0$ and $\Lambda_1$. Then $\Lambda_t$ has a \textit{good} projection if $\pi(\Lambda_t)$ can be discretized into a sequence of Reidemeister-like moves as shown in Figure \ref{Reidemeister}:
	
	(a) Reidemeister I move with bifurcation point on $\Gamma$
	
	(b) Reidemeister II move with bifurcation point off $\Gamma$
	
	(c) Reidemeister III move with bifurcation point off $\Gamma$
	
	(d) Reidemeister III-like move with two strands of $\Lambda$ and $\Gamma$, which we call \textit{Reidemeister IV move}

    (e) Composition of a Reidemeister II move and a Reidemeister IV move (see last row of Figure \ref{Reidemeister IV}).
\end{definition}

Now we state the main theorem of this paper:

\begin{theorem}\label{main}
	Let $(\Sigma, \Gamma)$ be a convex surface. 
	
	(a) Let $\Lambda$ be a Legendrian knot in $(\Sigma \times \R, \xi_\Gamma)$. Then there exists an arbitrarily small Legendrian perturbation $\Lambda'$ of $\Lambda$ such that $\Lambda'$ has a good projection.
	
	(b) Let $\Lambda_t$ be a Legendrian isotopy of Legendrian knots in $(\Sigma \times \R, \xi_\Gamma)$ such that $\Lambda_0$ and $\Lambda_1$ have good projections. Then there exists an arbitrarily small Legendrian perturbation $\Lambda'_t$ of $\Lambda_t$ such that $\Lambda'_0 = \Lambda_0$, $\Lambda'_1 = \Lambda_1$, and $\Lambda'$ has a good projection.
\end{theorem}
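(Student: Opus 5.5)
Our strategy is to reduce to a local normal form near $\Gamma$ together with the classical transversality arguments for the Lagrangian projection away from $\Gamma$. First we fix the model. Choose a contact form $\alpha = \beta + u\,dt$ on $\Sigma\times\R$ invariant under $\partial_t$, with $\beta$ a $1$-form and $u$ a function on $\Sigma$, such that $\Gamma=\{u=0\}$ and $du\neq 0$ along $\Gamma$. On $\Sigma\setminus\Gamma$ we divide by $u$ and get $dt + \beta/u$. There $\pi$ is exactly a Lagrangian-type projection: the Reeb field is $\partial_t$ and $d(\beta/u)$ is an area form on each component of $\Sigma\setminus\Gamma$. A Legendrian curve is therefore recovered from its projection up to a $t$-translation. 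Near $\Gamma$ we use coordinates $(s,y)\in \Gamma\times(-\epsilon,\epsilon)$ with $u=y$, and we may arrange $\alpha = y\,dt + ds$ up to isotopy rel $\Gamma$ (or $dt\,y - ds$, depending on the sign conventions). In these coordinates, the projection of $\Lambda$ to $\Gamma\times\R_t$, i.e.\ the $(s,t)$-plane, is a front: $y=-ds/dt$ is recovered as a slope. Consequently $\pi(\Lambda)$ is transverse to $\Gamma$ at a point exactly when the front slope $dt/ds$ is nonzero and finite there. Cusps of the front happen where $dt=0$, which corresponds to $y=\infty$ and so cannot occur near $\Gamma$; vertical tangencies $ds=0$ correspond to $y=0$ with $\dot y\ne 0$ generic.

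For part (a) we run a jet-transversality argument in the space of Legendrian embeddings, which we parametrize locally by Legendrian isotopies generated by contact Hamiltonians $H$. Near $\Gamma$ we perturb the front in the $(s,t)$-plane. A generic front has $y=-ds/dt$ transverse to $0$, which gives transversality of $\pi(\Lambda)$ to $\Gamma$. It also has distinct points on the same fiber over $\Gamma$ having distinct $(s,t)$, equivalently distinct $y$ when they share the same $s$, so that double points of $\pi$ avoid $\Gamma$. Away from $\Gamma$ we use the standard Lagrangian projection arguments (as in Chekanov) to remove tangential self-intersections and triple points. Every perturbation is given by a compactly supported Hamiltonian $H$, so it is $C^\infty$-small. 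Each of the failure conditions has codimension at least one in the appropriate multijet space, so Thom's multijet transversality theorem yields the claim.

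For part (b) we repeat the argument in one-parameter families and classify the codimension-one degenerations that occur at isolated times. Away from $\Gamma$ these are the same as for the Lagrangian projection: a self-tangency gives RII and a triple point gives RIII. A Lagrangian-projection RI cannot occur away from $\Gamma$, because a small immersed loop would have to enclose zero $d(\beta/u)$-area while the form is an area form there. Near $\Gamma$ we list codimension-one events of the front/$y$-data. A tangency of $\pi(\Lambda)$ with $\Gamma$ corresponds to $y$ having a degenerate zero, and a birth or death of two intersection points with $\Gamma$ creating a small loop gives move (a). A double point of $\pi$ crossing $\Gamma$ gives move (d). A double point that is born on $\Gamma$, where two strands with equal $s$ and $y=0$ are tangent, gives move (e). Finally, the projection remaining good while the front crosses itself is invisible.

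We expect this last classification near $\Gamma$ to be the main obstacle. We need to check that the local models really produce the pictured moves, in particular that the tangency with $\Gamma$ forces a loop (the RI with bifurcation on $\Gamma$) and not a bare fold. We also need to check that the $(e)$ configuration cannot be split generically into a separate II and IV. We would handle this by writing the strand near $\Gamma$ as $t=f(s)$ with $y=-1/f'(s)$, and computing explicitly, from a generic one-parameter family $f_\tau$, the projected curve $(s,-1/f_\tau'(s))$. Finally, we would fix the endpoints: since $\Lambda_0$ and $\Lambda_1$ are already good and goodness is an open condition, we can cut off the perturbation near $t=0,1$.
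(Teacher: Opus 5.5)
Your architecture is the paper's: the standard neighborhood $ds+y\,dt$ near $\Gamma$, fronts in $\Gamma\times\R_t$ there, the Lagrangian-projection argument for $dt+\beta/u$ away from $\Gamma$, jet transversality, and a list of codimension-one events. However, the dictionary between the front and $\pi$ near $\Gamma$, which is the real content of the paper's Lemma 3.1, is inverted in your proposal. As a result, the computation you plan for the step you call the main obstacle cannot work.

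\begin{itemize}
\item \textbf{Where $\pi(\Lambda)$ meets $\Gamma$.} Along $\Lambda$ one has $ds=-y\,dt$. So $\pi(\Lambda)$ meets $\Gamma$ exactly where the front slope $ds/dt$ \emph{vanishes}, not where $dt/ds$ is nonzero and finite.
\item \textbf{Cusps.} At a front cusp, $dt=0$ forces $ds=0$ too, and $y$ is just the finite limiting slope $-ds/dt$. So cusps do occur near $\Gamma$. Slope-$0$ cusps lie on $\Gamma$, where $\pi(\Lambda)$ crosses $\Gamma$ transversely.
\item \textbf{Your planned graph.} A graph $t=f(s)$, $y=-1/f'(s)$ never reaches $y=0$. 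Over $\Gamma$ one has $\dot s=0$, so $s$ is not a local parameter there.
\item \textbf{The correct local form.} Write $s=g(t)$, $y=-g'(t)$, so $\pi$ is $t\mapsto(g(t),-g'(t))$. Crossings with $\Gamma$ are critical points of $g$ (or slope-$0$ cusps), and transversality is nondegeneracy. Non-immersion points of $\pi$ are exactly degenerate critical points $g'=g''=0$. These are singular points of $\pi(\Lambda)$ on $\Gamma$, not tangencies with $\Gamma$.
\item \textbf{The RI move.} Cerf's birth--death model $g=\lambda t+t^3$ gives $\pi=(\lambda t+t^3,\,-\lambda-3t^2)$. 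This is an embedded arc for $\lambda>0$ and a semicubical cusp on $\Gamma$ at $\lambda=0$. For $\lambda<0$ it is a loop crossing $\Gamma$ twice with its double point at $y=2\lambda$, i.e.\ move (a). This settles your ``loop versus bare fold'' question.
\end{itemize}

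The same corrected model is what you need for (d) and (e). Since $\dot s=-y\dot t$, the projection $\pi_*\xi$ is a single line along $\Gamma$. Hence any two strands through a common point of $\Gamma$ are automatically tangent there.
\begin{itemize}
\item Write the strands as graphs $s_i(y)$ and set $D=s_1-s_2$. Then $D'(0)=0$ identically, and $D(y)=D(0)-\frac12\bigl(t_1'(0)-t_2'(0)\bigr)y^2+O(y^3)$, where $t_i'(0)=dt/dy$ along strand $i$.
\item When the $y^2$ coefficient is nonzero (for instance two Morse critical points of the front with different $g''$), a sign change of $D(0)$ creates or kills two crossings on opposite sides of $\Gamma$. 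This is why (e) cannot be split into a separate II and IV.
\item A single crossing passes through $\Gamma$ (move (d)) only when that coefficient vanishes, e.g.\ when both crossings are slope-$0$ cusps. This is consistent with the paper's cusp versus min/max dichotomy.
\end{itemize}

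As written, your part (b) near $\Gamma$ is an unverified list resting on an incorrect model. The missing piece is precisely the lemma identifying intersections with $\Gamma$, their transversality, and non-immersion points of $\pi$ with critical points of the front function $s(t)$ and their (non)degeneracy.
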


We will prove the above theorem in Section \ref{proof}, following a review of the proof of Legendrian Reidemeister theorems for the front and Lagrangian projections in Section \ref{background}.

Recall that the following lemma computes of the Thurston-Bennequin number of null-homologous Legendrian knots in thickened convex surfaces.

\begin{lemma}[\cite{EaglesRong2026}]
    Let $\Lambda \subset (\Sigma \times \R, \xi_\Gamma)$ be a Legendrian knot. The Thurston-Bennequin number of $\Lambda$ is given by \[\tb(\Lambda) = -\frac{1}{2}|\pi(\Lambda) \cap \Gamma| + \text{writhe}(\pi(\Lambda)).\]
\end{lemma}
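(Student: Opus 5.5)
The plan is to compare three framings of $\Lambda$: the contact framing $\mathrm{fr}_\xi$, the \emph{vertical framing} $\mathrm{fr}_v$ obtained by pushing $\Lambda$ along $\partial_t$, and the Seifert framing (which exists because $\Lambda$ is null-homologous). Since $\tb(\Lambda)$ is the contact framing measured against the Seifert framing, we can write
\[
\tb(\Lambda) = \bigl(\mathrm{fr}_\xi - \mathrm{fr}_v\bigr) + \bigl(\mathrm{fr}_v - \mathrm{fr}_{\mathrm{Seifert}}\bigr).
\]
The first bracket should give $-\tfrac12|\pi(\Lambda)\cap\Gamma|$ and the second should give $\text{writhe}(\pi(\Lambda))$. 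Throughout I assume $\Lambda$ has a good projection, as in Theorem~\ref{main}(a). This is harmless because both sides of the formula are invariant under small Legendrian perturbation.

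\textbf{Second term.} Let $\Lambda_T$ denote $\Lambda$ translated by $T$ in the $\R$-direction, so that $\mathrm{fr}_v - \mathrm{fr}_{\mathrm{Seifert}} = \mathrm{lk}(\Lambda,\Lambda_\varepsilon)$. Increase $T$ from $\varepsilon$ to a very large value. The linking number $\mathrm{lk}(\Lambda,\Lambda_T)$ changes only when $\Lambda_T$ passes through $\Lambda$. This happens exactly at the double points of $\pi(\Lambda)$: the lift of the lower strand rises through the upper strand. Each such passage changes the linking number by $\mp1$, with the sign determined by the crossing sign; the orientation conventions are fixed so that the total change is $-\text{writhe}(\pi(\Lambda))$. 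Now choose a compact Seifert surface $S$ for $\Lambda$. Once $T$ exceeds the $t$-height of $S$, the knot $\Lambda_T$ is disjoint from $S$, so $\mathrm{lk}(\Lambda,\Lambda_T)=0$. Hence $\mathrm{lk}(\Lambda,\Lambda_\varepsilon)=\text{writhe}(\pi(\Lambda))$.

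\textbf{First term.} The vector field $\partial_t$ is a contact vector field transverse to $\Sigma$, and it lies in $\xi$ exactly over $\Gamma$. Along $\Lambda$, the contact plane $\xi$ contains $T\Lambda$, so the contact framing is given by a normal vector lying in $\xi$. Away from $\pi^{-1}(\Gamma)$, the vector $\partial_t$ is transverse to $\xi$, so in the normal bundle of $\Lambda$ the contact framing and the vertical framing are never parallel. They become parallel precisely at the points of $\Lambda\cap\pi^{-1}(\Gamma)$. Because $\pi(\Lambda)\pitchfork\Gamma$, each such coincidence is transverse, and the contact framing rotates by exactly a half twist relative to $\partial_t$ as $\Lambda$ crosses it.

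I would verify this in the local model near $\Gamma$, with coordinates $(x,y,t)$, $\Gamma=\{y=0\}$, and contact form $\alpha = y\,dt + dx$ (or a similar normal form of a $t$-invariant contact form). In that model one checks directly that every crossing of $\Gamma$ contributes the same sign $-\tfrac12$, regardless of the crossing direction. The point is that the rotation of $\xi$ about the $x$-direction as $y$ varies is monotone with a fixed handedness. Summing over all crossings gives $-\tfrac12|\pi(\Lambda)\cap\Gamma|$; this number is an integer since a closed curve crosses $\Gamma$ an even number of times.

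\textbf{Main obstacle.} I expect the sign bookkeeping to be the hard part. One must confirm that every $\Gamma$-crossing contributes a negative half twist independent of orientation, using the local model and the orientation convention on $\xi_\Gamma$. One must also make the crossing-sign convention in the vertical pushoff argument consistent with the usual definition of writhe. A sanity check is to apply the formula to a small Legendrian unknot lying over a disk that meets $\Gamma$ in one arc, where $\tb=-1$ should be recovered.
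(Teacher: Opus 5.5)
Your proposal is correct. It is also a genuinely different route from what the paper offers: the paper does not prove this lemma but imports it from \cite{EaglesRong2026}, remarking only that one could take the right-hand side as a definition and check invariance under the moves of Theorem~\ref{main}. That remark by itself only shows the right-hand side is a Legendrian isotopy invariant; it does not identify it with $\tb$.

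Your decomposition $\tb=(\mathrm{fr}_\xi-\mathrm{fr}_v)+(\mathrm{fr}_v-\mathrm{fr}_{\mathrm{Seifert}})$ proves the identity directly. It generalizes the classical proof that $\tb$ equals the writhe of the Lagrangian projection, where $\partial_z$ is never in $\xi$ and so the first bracket vanishes. The only new input is that $\partial_t\in\xi$ exactly over $\Gamma$. The argument also explains why the Reidemeister I crossing must be positive: that move adds two $\Gamma$-crossings, costing $-1$ in the first bracket, which the new crossing must offset.

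The sign bookkeeping you flag as the main obstacle closes cleanly, and without the normal form of Lemma~\ref{stdNbhd}, which holds only after a contact isotopy. Take $\alpha=\beta+u\,dt$ and $\gamma(s)=(p(s),t(s))$. Pick $e\in\xi$ with $d\alpha(\dot\gamma,e)>0$ and $n$ with $\alpha(n)>0$; then $\alpha\wedge d\alpha(\dot\gamma,e,n)>0$, so $(e,n)$ is an oriented basis of the normal plane. Write $\partial_t\equiv Ae+Bn$ modulo $\dot\gamma$.
\begin{itemize}
\item The sign of $B$ is the sign of $u(p(s))$.
\item At a zero of $u\circ p$, the sign of $A$ is that of $d\alpha(\dot\gamma,\partial_t)=\tfrac{d}{ds}u(p(s))$.
\end{itemize}
So in both crossing directions the angle of $\partial_t$ relative to $e$ increases through a multiple of $\pi$, and between zeros it never reaches such a multiple. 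This gives $\mathrm{fr}_\xi-\mathrm{fr}_v=-\tfrac12|\pi(\Lambda)\cap\Gamma|$ exactly as you claim.

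Likewise, in your translation argument no choice of convention is needed. The passage at a crossing of sign $\epsilon_i$ is a crossing change between $\Lambda$ and $\Lambda_T$ from sign $\epsilon_i$ to $-\epsilon_i$. It therefore changes $\mathrm{lk}(\Lambda,\Lambda_T)$ by $-\epsilon_i$, which yields $\mathrm{lk}(\Lambda,\Lambda_\varepsilon)=\text{writhe}(\pi(\Lambda))$. This uses the orientation in which $\Sigma$ followed by $\partial_t$ is the contact orientation $\alpha\wedge d\alpha$.
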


Using the Legendrian Reidemeister theorem, one can take the above computation as a definition and check that it is invariant under the Reidemeister moves. Note that for $\tb(\Lambda)$ to be an invariant, it is important that the sign of the crossing in the Reidemeister I move is the one given in Figure \ref{Reidemeister}.

\subsection*{Acknowledgments}
    This paper originated as the first part of the author's thesis project, proposed by his advisor Julian Chaidez. We thank Julian for his guidance in formulating the problem and developing its solution. We thank Marc Kegel for pointing out two mistakes in Lemmas \ref{keyLemma} and \ref{finite_sing_perturbation}, and Marc and Joan Licata for bringing Jack Brand's paper \cite{Brand2023} to our attention. We thank Nancy Eagles, the author's collaborator on the second part of the thesis project, for many helpful discussions arising from our joint work on that part.

\section{Background}\label{background}

In this section we give a rough sketch of proofs of the Legendrian Reidemeister theorem for the front projection \cite{Swiatkowski1992} and Lagrangrian projection in $\R^3$. We also review a result giving a standard neighborhood $N(\Gamma)$ of the dividing curve $\Gamma$.




	




\subsection{Legendrian Reidemeister theorem}

We assume basic familiarity with transversality and the smooth Reidemeister theorem. See for example \cite{GolubitskyGuillemin1973} and \cite{Queffelec2025}\cite[Appendix B]{OSS2015}.

\subsubsection{The front projection}

In this subsection we review Światkowski's proof of the Legendrian Reidemeister Theorem for the front projection \cite{Swiatkowski1992}. See \cite[Appendix B]{OSS2015} for a more detailed exposition.

\begin{definition}
	The \textit{front projection} is $\pi_F: (\R^3, \ker(dz - ydx)) \rightarrow \R^2$ given by $(x, y, z) \mapsto (x, z)$.
\end{definition}

\begin{theorem}
	(a) Let $\Lambda$ be a Legendrian knot in $\R^3$. Then there exists an arbitrarily small Legendrian perturbation $\Lambda'$ of $\Lambda$ such that $\pi_F(\Lambda)$ has only simple cusps (i.e. given by $x_2^2 = x_1^3$ in local coordinates $(x_1, x_2)$) and transverse double points.
	
	(b) Let $\Lambda_t$ be a Legendrian isotopy in $\R^3$ such that $\Lambda_0$ and $\Lambda_1$ satisfy (a). Then there exists an arbitrarily small Legendrian perturbation $\Lambda'_t$ of $\Lambda_t$ such that $\Lambda'_0 = \Lambda_0$, $\Lambda'_1 = \Lambda_1$, and $\pi_F(\Lambda'_t)$ can be discretized into a sequence of Reidemeister moves shown in Figure \ref{front_Reid_moves}.
\end{theorem}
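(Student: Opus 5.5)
The plan is the standard Światkowski argument: reduce everything to transversality for front curves in the jet-space picture. A Legendrian knot in $(\R^3,\ker(dz-ydx))$ is determined by its front $\pi_F(\Lambda)=(x(s),z(s))$, since $y=z'(s)/x'(s)$ wherever $x'\neq 0$, extended by continuity. Conversely, a Legendrian curve is the image of a map $s\mapsto (x(s),y(s),z(s))$ with $z'=yx'$. So I will perturb through the pair of functions $(x(s),y(s))$ on $S^1$, with $z$ obtained by integrating $yx'$. The one constraint is that $\oint yx'\,ds=0$. This is a single closed linear condition, so it can be maintained by adding a small correction supported on a fixed interval where $x'\neq0$ (such an interval exists unless the knot lies in a line, which is impossible for a Legendrian knot). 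Perturbations of $(x,y)$ that are small in $C^\infty$ therefore give $C^\infty$-small Legendrian perturbations.

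For (a), I will argue as follows.
\begin{enumerate}[(i)]
\item Consider the $1$-jet data $(x,x',x'',y,y')$ along $\Lambda$. Singularities of the front occur exactly where $x'=0$; at such points $y'\neq0$ by immersion.
\item By the jet transversality theorem in the space of $(x,y)$, I can arrange that $x'$ has only nondegenerate zeros, i.e. $x''\neq0$ wherever $x'=0$. Combined with $y'\neq0$, this gives $z'=yx'$, $z''=y'x'+yx''$ and $z'''$ at such a point. After a linear change of coordinates, the local normal form is $(s^2, s^3)$, a simple cusp $x_2^2=x_1^3$.
\item Distinct points with the same $(x,z)$ give double points of the front. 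Multijet transversality on $S^1\times S^1\setminus\Delta$ makes them transverse and rules out triple points. Transversality here is automatic for distinct $y$ values; tangency would force equal $y$, i.e. an actual self-intersection of $\Lambda$, which does not occur.
\item Collisions of cusps with other strands are codimension $\geq 1$ conditions and are likewise avoided.
\end{enumerate}

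For (b), I run the same argument with one extra parameter $t\in[0,1]$, fixing the endpoints, which are already generic by (a). Parametric (multi)jet transversality implies that the codimension-one degenerations occur at isolated times $t_i$, and in generic form. There are only a few of them:
\begin{itemize}
\item a degenerate zero of $x'$ with $x'''\neq0$ (birth or death of a pair of cusps, which gives the Legendrian Reidemeister I move);
\item a tangency of two strands, or a cusp passing through a strand (Reidemeister II);
\item a triple point (Reidemeister III).
\end{itemize}
Between the $t_i$ the front is stable, so it changes by planar isotopy. Near each $t_i$ I compare with the local model and read off the move pictured in Figure \ref{front_Reid_moves}.

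The main obstacle is the classification in (b). I need to check that each codimension-one stratum of the space of Legendrian fronts gives exactly one of the listed moves, and that the tangency strata interact correctly with the Legendrian condition. In particular, two strands with equal slope at a common point cannot be tangent in the smooth sense in a way that creates the forbidden configurations. I will handle this stratum by stratum, writing each singularity in terms of $(x,y)$ jets and computing the codimension.
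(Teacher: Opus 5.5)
Your proposal is correct and is essentially the paper's argument: make $x$ Morse so that the front singularities are simple cusps, lift the perturbed $(x,y)$ data by integrating $z'=yx'$ and close it up with a small correction supported where $x'\neq 0$ (your bump in $y$ plays the role of the paper's branch deformation), then apply transversality in families, with $x=s^3+ts$ giving Reidemeister I and double/triple points giving Reidemeister II/III. One slip to fix: in (b) the ``tangency of two strands'' stratum is empty for a Legendrian isotopy, by exactly your observation in (iii), so the only Reidemeister II event is a cusp passing through a strand.
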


\begin{figure}
    \begin{center}
    	\includegraphics[width=1.0\textwidth]{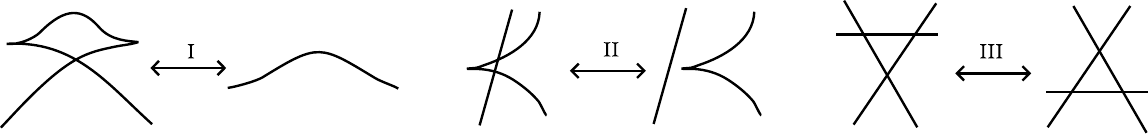}
    	\caption{Reidemeister moves for the front projection}
    	\label{front_Reid_moves}
    \end{center}
\end{figure}

\begin{proof}
	Consider the case of a single Legendrian knot $\Lambda$. One can follow the following procedure to perturb $\Lambda$ so that it is front generic:
    
    \textit{Step-1:} Assume $\Lambda$ is parametrized by $(x(s), y(s), z(s))$. Perturb $x(s)$ into a Morse function $\widetilde{x}(s)$. Note that critical points of $\widetilde{x}(s)$ correspond to cusps in the front projection. Here is a very rough sketch: Assume $\widetilde{x}$ has a Morse critical point at $s = 0$. Ideally $\widetilde{x}(s) = s^2$. Then the contact condition $z' = yx'$ implies $z'(0) = 0$ so $y'(0) \neq 0$. Again ideally $y(s) = \frac{3}{2}s$. So $z'(s) = \frac{3}{2}s \cdot 2s = 3s^2$ and ideally $z(s) = s^3$. This gives the simple cusp $(\widetilde{x}(s), z(s)) = (s^2, s^3)$. In general one can either pick appropriate local coordinates to obtain this local normal form or use an $\epsilon-\delta$ argument as in \cite[Section 3]{Queffelec2025} to obtain a Taylor approximation.

    \textit{Step-2:} Lift the Lagrangian projection $(\widetilde{x}(s), y(s))$ into a Legendrian arc $\widetilde{\Lambda}$ by cutting it at a branch point (i.e., a non-cusp point).

    \textit{Step-3:} In general the Legendrian lift in (b) is not closed at the cutting point, so we need to apply ``branch deformation'' to close up the Legendrian knot. See Figure \ref{branch_deformation} for an illustration of branch deformation and \cite[Section 2]{Swiatkowski1992} for more details. We remark that branch deformation does not change the Legendrian isotopy class of the original knot.

    \begin{figure}
        \begin{center}
        	\includegraphics[width=0.6\textwidth]{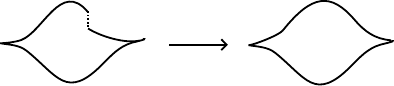}
        	\caption{Branch deformation}
        	\label{branch_deformation}
        \end{center}
    \end{figure}

    Next consider the case of a Legendrian isotopy $\Lambda_t$. The idea of proof is similar to the knot case, except that in a generic $1$-parameter family $\widetilde{x}(s, t) = s^3 + ts$ is possible, which corresponds to a Reidemeister I move for $t \in (-\epsilon, \epsilon)$. This comes from the classical theorem of Whitney: \begin{theorem}
	   A generic map between $2$-manifolds is a local diffeomorphism except that along a $1$-dimensional submanifold it is a fold singularity (i.e. given by $(x_1, x_2) \mapsto (x_1, x_2^2)$ in local coordinates) and along a $0$-dimensional submanifold it is a simple cusp singularity (i.e. given by $(x_1, x_2) \mapsto (x_1, x_2^3 + x_1x_2)$).
    \end{theorem}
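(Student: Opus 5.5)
The plan is to deduce Whitney's theorem from Thom's jet transversality theorem, which says that for a fixed submanifold of a jet bundle, the maps whose jet extension is transverse to it form a residual (in particular dense) set. Given $f\colon M\to N$ between surfaces, I will stratify the jet spaces $J^k(M,N)$ by rank and Boardman conditions and impose transversality to finitely many strata. Finitely many such transversality conditions still leave a residual set of $f$. I will then read off local normal forms from the transversality, using the Morse lemma with parameters and the Malgrange preparation theorem.

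\emph{Step 1 (the singular curve).} In $J^1(M,N)$, let $S_1$ be the jets of rank exactly $1$ and $S_0$ the jets of rank $0$. Locally $J^1$ is $M\times N\times \mathrm{Hom}(\R^2,\R^2)$, and the rank-$r$ matrices have codimension $(2-r)^2$. So $S_1$ has codimension $1$ and $S_0$ has codimension $4$. For generic $f$, the map $j^1f$ misses $S_0$ and is transverse to $S_1$. Hence $\Sigma(f)=(j^1f)^{-1}(S_1)$ is a smooth embedded curve, and away from it $f$ is a local diffeomorphism. Near a point $p\in\Sigma(f)$, $df$ has rank $1$, so I can choose coordinates with $f(x_1,x_2)=(x_1,g(x_1,x_2))$. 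Then $\Sigma(f)=\{\partial_{x_2}g=0\}$, and transversality to $S_1$ becomes the condition $d(\partial_{x_2}g)\neq 0$ at $p$.

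\emph{Step 2 (folds).} The kernel line of $df_p$ is $\partial_{x_2}$. The point $p$ is a fold point when $\partial_{x_2}$ is transverse to $T_p\Sigma(f)$, which is equivalent to $\partial^2_{x_2}g(p)\neq 0$. In that case, for each fixed $x_1$, the function $x_2\mapsto g(x_1,x_2)$ has a nondegenerate critical point depending smoothly on $x_1$. The Morse lemma with parameters, followed by a coordinate change in the target (subtracting the critical value $c(x_1)$), gives $f=(x_1,\pm x_2^2)$. The sign can be absorbed into the target coordinate, yielding the fold normal form.

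\emph{Step 3 (cusps; the main obstacle).} The remaining points of $\Sigma(f)$ are those where $\partial_{x_2}$ is tangent to $\Sigma(f)$, namely the Boardman stratum $\Sigma^{1,1}$. In $J^2$ this is given by the two equations $\partial_{x_2}g=\partial^2_{x_2}g=0$, so it has codimension $2$. Making $j^2f$ transverse to it makes these points isolated. I will also make $j^3f$ transverse to $\Sigma^{1,1,1}$, defined by additionally $\partial^3_{x_2}g=0$. This stratum has codimension $3$, so it is avoided entirely, and therefore $\partial^3_{x_2}g(p)\neq0$ at every cusp point.

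Transversality to $\Sigma^{1,1}$ at $p$ says that the map $(\partial_{x_2}g,\partial^2_{x_2}g)$ is a submersion at $p$. Since $\partial_{x_2}\partial^2_{x_2}g\neq 0$ there, this forces $\partial_{x_1}\partial_{x_2}g(p)\neq0$. The point of this step is that the family $g(x_1,\cdot)$ is then a versal unfolding of the $A_2$ singularity $x_2^3$. To finish, I plan to apply Malgrange preparation to $\partial_{x_2}g$, which after rescaling is $x_2^2+a(x_1)$ times a unit with $a'(0)\neq0$. Integrating and then changing coordinates in the source and target, I expect to reach $g=x_2^3+x_1x_2$. A more careful route would be to invoke the Mather-type statement that versal unfoldings of the same singularity with the same number of parameters are equivalent. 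I expect this normal-form bookkeeping, rather than the transversality counts, to be the delicate part.

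Finally, intersecting the residual sets from the three transversality conditions gives the genericity statement. The fold curve is $\Sigma(f)$ minus the isolated cusp points, and the cusps form a $0$-dimensional submanifold, as claimed.
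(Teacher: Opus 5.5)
The paper does not prove this statement. It quotes it as Whitney's classical theorem inside the sketch of the front-projection Reidemeister theorem, pointing only to the general transversality literature (e.g.\ Golubitsky--Guillemin). So there is no internal argument to compare with. Your proposal supplies the standard textbook proof of that black box: Thom transversality to the Boardman strata $\Sigma^1\subset J^1$ and $\Sigma^{1,1}\subset J^2$, followed by local normal forms.

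The skeleton is sound. The codimension counts are right. In adapted coordinates $f=(x_1,g)$ the following characterizations are all correct:
\begin{itemize}
\item $\Sigma(f)=\{g_{x_2}=0\}$;
\item fold $\Leftrightarrow g_{x_2x_2}(p)\neq 0$;
\item at $p\in\Sigma^{1,1}(f)$, $j^2f\pitchfork\Sigma^{1,1}$ $\Leftrightarrow g_{x_1x_2}g_{x_2x_2x_2}-g_{x_2x_2}g_{x_1x_2x_2}\neq 0$ at $p$.
\end{itemize}
Since $g_{x_2x_2}(p)=0$ at a cusp, this Jacobian equals $g_{x_1x_2}(p)\,g_{x_2x_2x_2}(p)$. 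So transversality to $\Sigma^{1,1}$ alone already gives both $g_{x_1x_2}(p)\neq 0$ and $g_{x_2x_2x_2}(p)\neq 0$, and your extra condition on $\Sigma^{1,1,1}$ is harmless but redundant.

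For rigor, define $\Sigma^{1,1}$ intrinsically in $J^2$: rank one, and the intrinsic second derivative $\ker df_p\otimes\ker df_p\to\operatorname{coker} df_p$ vanishes. Do not define it through coordinates adapted to one particular $f$. One can then choose local defining functions $\Phi_1,\Phi_2$ near $j^2f(p)$ with $\Phi_1\circ j^2f=g_{x_2}$ and $\Phi_2\circ j^2f=g_{x_2x_2}$. This is what justifies your submersion criterion.

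\textbf{The step that fails as written} is the ``integrate'' route to the cusp form. Preparation gives $g_{x_2}=u(x_1,x_2)\bigl(x_2^2+b(x_1)x_2+a(x_1)\bigr)$ with $u$ a unit. Removing $b$ needs a shift in $x_2$, not a rescaling. More importantly, $u$ still depends on $x_2$, so integrating in $x_2$ does not produce $x_2^3+x_1x_2$. What is missing is a parametrized determinacy statement for the $A_2$ singularity.

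Your fallback supplies exactly that, once you check the versality criterion:
\begin{itemize}
\item $g(0,\cdot)-g(0,0)$ is right-equivalent to $x_2^3$, whose Jacobian ideal is $(x_2^2)$.
\item $1$ and $\partial_{x_1}g(0,\cdot)$ span $\mathcal E_1/(x_2^2)$ (germs in $x_2$) precisely because $g_{x_1x_2}(p)\neq 0$.
\item Hence $g$ is a miniversal unfolding, and $g(x_1,x_2)=G\bigl(\phi(x_1),\Psi(x_1,x_2)\bigr)+c(x_1)$ with $G(s,v)=v^3+sv$ and $\phi'(0)\neq 0$.
\item The changes $(x_1,x_2)\mapsto(\phi(x_1),\Psi(x_1,x_2))$ and $(y_1,y_2)\mapsto(\phi(y_1),y_2-c(y_1))$ then give the cusp normal form.
\end{itemize}
Alternatively, apply Malgrange preparation to $f$ itself, writing $x_2^3$ as a combination of $1,x_2,x_2^2$ with coefficients pulled back by $f$; this is the classical proof. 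With that step made explicit, your argument is a complete proof of the statement the paper takes for granted.
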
 \noindent See Figure \ref{front_Reid_I}. Here branch deformation needs to be applied $1$-parametrically. Also double and triple intersections are possible, which correspond to the Reidemeister II and III moves. These come from further perturbations of the front projection, which always lifts to Legendrian knots.
\end{proof}

\begin{figure}
    \begin{center}
        \includegraphics[width=1.0\textwidth]{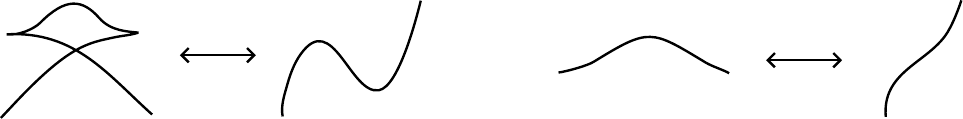}
        \caption{Reidemeister I move for the front projection}
        \label{front_Reid_I}
    \end{center}
\end{figure}

\subsubsection{The Lagrangian projection}

In this subsection we give a proof of the Legendrian Reidemeister theorem for the Lagrangian projection.

\begin{definition}
	The \textit{Lagrangian projection} is $\pi_L: (\R^3, \ker(dz - ydx)) \rightarrow \R^2$ given by $(x, y, z) \mapsto (x, y)$.
\end{definition}

\begin{theorem}
	(a) Let $\Lambda$ be a Legendrian knot in $\R^3$. Then there exists an arbitrarily small Legendrian perturbation $\Lambda'$ of $\Lambda$ such that $\pi_L(\Lambda)$ has only transverse double points.
	
	(b) Let $\Lambda_t$ be a Legendrian isotopy in $\R^3$ such that $\Lambda_0$ and $\Lambda_1$ satisfy (a). Then there exists an arbitrarily small Legendrian perturbation $\Lambda'_t$ of $\Lambda_t$ such that $\Lambda'_0 = \Lambda_0$, $\Lambda'_1 = \Lambda_1$, and $\pi_L(\Lambda'_t)$ can be discretized into a sequence of Reidemeister II, IIIa, and IIIb moves shown in Figure \ref{Reidemeister}.
\end{theorem}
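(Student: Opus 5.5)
The approach is to reduce everything to the observation that $\pi_L$ restricted to a Legendrian knot is always an immersion. The $z$-coordinate is recovered from the projection by integration, $z(s) = z(0) + \int_0^s y\,dx$. Moreover, a closed immersed curve $\gamma$ in the plane lifts to a closed Legendrian knot exactly when $\int_\gamma y\,dx = 0$, that is, when the signed area enclosed is zero. Immersion holds because if $x'(s) = y'(s) = 0$ then the contact condition forces $z'(s) = 0$, contradicting regularity. So the only singularities of $\pi_L(\Lambda)$ are multiple points. The plan is to perturb the planar curve generically, lift it, and repair the failure of the lift to close up. The repair uses an area-adjusting deformation analogous to the branch deformation used for the front projection.

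For (a), I perturb the parametrization $(x(s), y(s))$ by a $C^\infty$-small perturbation supported away from a chosen point. Standard transversality (multijet transversality, as in \cite{GolubitskyGuillemin1973}) makes the immersed curve have only transverse double points. The signed area changes by some small amount $\delta$. I correct it by a small local modification near a point of the curve where no crossing occurs, for instance adding a tiny bump whose enclosed area is $-\delta$. The perturbation and correction are chosen small, and the correction region is chosen disjoint from the other strands, so no new non-generic multiple points appear. Away from the endpoints of the $z$-integration, the lift is $C^0$-close to $\Lambda$, and $C^1$-close after adjusting the constant $z(0)$. Small Legendrian perturbations are Legendrian isotopic, so this gives $\Lambda'$.

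For (b), I consider the family $\gamma_t = \pi_L(\Lambda_t)$ as a map $S^1 \times [0,1] \to \R^2 \times [0,1]$ and apply transversality for one-parameter families of immersed curves, relative to the endpoints. Generically, $\gamma_t$ has transverse double points except at finitely many times, where exactly one of three things happens: a tangency of two strands, giving a Reidemeister II move; a triple point with pairwise transverse strands, giving Reidemeister IIIa or IIIb according to the height ordering of the lifts; or nothing else. Reidemeister I cannot occur, because it requires a cusp in the planar projection, which would violate the immersion property. I then correct the area one-parametrically by adding a family of bumps of area $-\delta_t$ in a fixed small region. The region is chosen near a parameter point $s_0$ that avoids all crossings for all $t$, after possibly varying $s_0$ with $t$ in a piecewise manner. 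Since $\delta_0 = \delta_1 = 0$, the endpoints are preserved. The lifts $\Lambda'_t$ are then a Legendrian isotopy close to $\Lambda_t$.

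I expect the main obstacle to be the correction step in (b): ensuring that the area-correcting bump family creates no additional non-generic events, and that the choice of bump location can be made continuously in $t$. My first attempt would be to make the bumps of size $O(|\delta_t|)$ supported in an arbitrarily thin neighborhood of an arc of $\gamma_t$ that is embedded and far from other strands. If such an arc must jump during the isotopy, I would transfer area between two bump locations while both are simultaneously valid, and then recheck genericity of the whole family by one further transversality perturbation fixing the total area.
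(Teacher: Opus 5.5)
Your proposal is correct and follows essentially the same route as the paper: perturb the planar curve (or family of curves) generically, lift by integrating $y\,dx$, close up with a small area-correcting bump, and rule out Reidemeister I because $\pi_L|_{\Lambda}$ is always an immersion. The paper closes up with branch deformation or, equivalently, the Fuchs--Tabachnikov bumps it cites, and your overlapping bump sites for the $t$-dependent location are more careful than its one-line ``one-parameter family of branch deformations''; just make explicit that the bump is a $C^\infty$-small normal push of an embedded arc with amplitude $O(\delta)$, not a small curl, since a curl changes the writhe and would be a stabilization.
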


\begin{proof}
	First we consider the case of a single knot $\Lambda$. Fix a parametrization $(x(s), y(s), z(s))$ of $\Lambda$. Perturb $(x(s), y(s))$ smoothly into $(\widetilde{x}(s), \widetilde{y}(s))$ so that it has a good projection, i.e., it has only transverse double points. Fix any $s_0$ such that $(\widetilde{x}(s_0), z(s_0))$ is not a cusp point in the front projection. Then lift the Lagrangian projection to a Legendrian arc $(\widetilde{x}(s), \widetilde{y}(s), \widetilde{z}(s))$ starting from $(\widetilde{x}(s_0), \widetilde{y}(s_0), z(s_0))$. Note that in general it is not a Legendrian knot (i.e. not necessarily closed), so we need to apply branch deformation near $s_0$. This has the effect of adding a little bump to the Lagrangian projection near $(\widetilde{x}(s_0), \widetilde{y}(s_0))$, so alternatively one can mention no branch deformation but only adding small bumps, such as in \cite[Lemma 4.1]{FuchsTabachnikov1997}.

	The above argument works also for a Legendrian isotopy $\Lambda_t$ in which case we need a one-parameter family of branch deformations. Standard genericity arguments in the smooth case give finitely many Reidemeister II/III moves. We claim that Reidemeister I moves are not allowed. Suppose $x'(s_0) = y'(s_0) = 0$. Then $z'(s_0) = y(s_0)x'(s_0) = 0$ so $(x'(s_0), y'(s_0), z'(s_0)) = (0, 0, 0)$, which is a contradiction. Thus as a small perturbation we also have $(\widetilde{x}'(s), \widetilde{y}'(s)) \neq (0, 0)$ for all $s$.
\end{proof}

\subsection{Convex surface theory}

In this subsection we recall some well-known facts of convex surfaces that will be useful later on.

\begin{definition}
	A closed surface $\Sigma$ in a contact $3$-manifold $(M, \xi)$ is \textit{convex} if there exists a contact vector field $X$ near $\Sigma$ transverse to $\Sigma$. $X$ gives a local coordinate $t$ such that there exists a contact form $\alpha$ of the form $\beta + udt$, where $\beta \in \Omega^1(\Sigma)$ and $u: \Sigma \rightarrow \R$. $\Gamma := u^{-1}(0)$ is a union of circles called the \textit{dividing curve}. The dividing curve separates the convex surface into $\Sigma_\pm := u^{-1}(\R_\pm)$.
\end{definition}

\begin{example}
	Consider $\R^3$ with $\xi_{\text{std}} = \ker(dz - ydx)$. Taking $X = \frac{\partial}{\partial x}$, the $yz$-plane is convex with dividing curve $\Gamma = z$-axis and $\Sigma_\pm = \{\mp y > 0\}$. Note that this is the other projection in $\R^3$ apart from the Lagrangian projection ($xy$-projection) and the front projection ($xz$-projection).
\end{example}

The following lemma shows that near the dividing curve $\Gamma$ the convex surface always looks like the above example. 

\begin{lemma}[\text{\cite[Lemma 2.13]{Breen2024}}]\label{stdNbhd}
	Up to a contact isotopy of $\xi$ preserving $\Gamma$ there exists a neighborhood $N(\Gamma) = \Gamma_\theta \times [-\epsilon, \epsilon]_x \times \R_t$ and a contact form $\alpha$ such that $\alpha|_{N(\Gamma)} = d\theta + xdt$.
\end{lemma}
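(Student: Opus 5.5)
The plan is to put the contact form into the normal form in two stages. First I choose coordinates in which $\alpha = \beta + u\,dt$ is almost standard. Then I remove the one leftover term with a $t$-invariant Moser/Gray argument whose flow preserves $\Gamma$.

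\emph{Step 1: coordinates.} The form $\alpha = \beta + u\,dt$ is $t$-invariant, so the contact condition reads
\[
\alpha\wedge d\alpha = (u\,d\beta + \beta\wedge du)\wedge dt \neq 0 .
\]
Along $\Gamma = u^{-1}(0)$ this gives $\beta\wedge du \neq 0$. Hence $du\neq 0$ on $\Gamma$, so $\Gamma$ is a smooth union of circles. Moreover $\beta|_{T\Gamma}$ is nowhere zero. I take $x := u$ as a collar coordinate, giving $N(\Gamma)\cong \Gamma\times[-\epsilon,\epsilon]_x$ for small $\epsilon$. On each component of $\Gamma$ I choose an angular coordinate $\theta$ and extend it to be constant in $x$. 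In these coordinates $\beta = a(\theta,x)\,d\theta + b(\theta,x)\,dx$, with $a\neq 0$ near $x=0$; after orienting $\theta$ suitably, $a>0$.

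\emph{Step 2: normalize the $d\theta$ coefficient.} I rescale by the positive function $1/a$, which does not change $\xi$: the new form is $d\theta + (b/a)\,dx + (x/a)\,dt$. Set $x' := x/a$. Since $a>0$ near $\Gamma$, $\partial_x x' \neq 0$ at $x=0$, so $x'$ is a new collar coordinate with the same zero set $\Gamma$. Rewriting in $(\theta,x',t)$ gives
\[
\alpha = d\theta + c(\theta,x)\,dx + x\,dt
\]
(dropping primes), with $c$ independent of $t$. If the components of $\Gamma$ need rescaling so that $\theta$ is genuinely periodic on $\Gamma_\theta$, I do it here; I expect this to be harmless.

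\emph{Step 3: Moser isotopy killing $c\,dx$.} Set $\alpha_s = d\theta + s\,c\,dx + x\,dt$ for $s\in[0,1]$. Since $dx\wedge dx=0$, we get $d\alpha_s = dx\wedge dt$ and $\alpha_s\wedge d\alpha_s = d\theta\wedge dx\wedge dt$, so every $\alpha_s$ is contact. I look for $X_s$ with $\alpha_s(X_s)=0$ and $\iota_{X_s}d\alpha_s = -\dot\alpha_s = -c\,dx$. Then $\mathcal L_{X_s}\alpha_s = -\dot\alpha_s$ exactly, and the flow $\phi_s$ satisfies $\phi_s^*\alpha_s=\alpha_0$, so $\phi_1$ carries $\ker\alpha$ to $\ker(d\theta + x\,dt)$ as forms, not just up to scaling. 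A direct solution is
\[
X_s = c\,\partial_t - x\,c\,\partial_\theta .
\]
Indeed $\iota_{X_s}(dx\wedge dt) = -c\,dx$ and $\alpha_s(X_s) = -xc + xc = 0$.

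The key observation is that $X_s$ has no $\partial_x$-component. So the flow preserves every level set of $x$, in particular $\Gamma=\{x=0\}$ and the collar $N(\Gamma)$. On $\Gamma$ it moves points only in the $t$-direction, which is what ``preserving $\Gamma$'' should mean once $\Gamma$ is viewed as $\Gamma\times\{0\}\times\R_t$. The field $X_s$ is $t$-invariant and has bounded coefficients on the compact set $\Gamma\times[-\epsilon,\epsilon]$, so its flow is complete. To obtain a global contact isotopy of $\xi$, I cut off the corresponding contact Hamiltonian outside a slightly larger collar.

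\emph{Main obstacle.} I expect the main difficulty to be bookkeeping rather than a computation. First, I must make the coordinate changes in Steps 1--2, especially the reparametrization $x\mapsto x/a$ and the periodicity of $\theta$, consistent with the claimed product structure $\Gamma_\theta\times[-\epsilon,\epsilon]_x\times\R_t$. Second, I must justify that a contact-form isotopy supported near $\Gamma$ (after the cutoff) counts as the ``contact isotopy of $\xi$ preserving $\Gamma$'' in the statement. The cutoff changes the vector field only away from $x=0$, so $\Gamma$ is still preserved.
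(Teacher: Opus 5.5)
Your overall plan is workable, but Step 2 fails as written. (The paper only quotes this lemma from Breen, so there is no proof of its own to compare with.) When you keep $\theta$ and replace $x$ by $x'=x/a(\theta,x)$, the inverse $x=g(\theta,x')$ has $g_\theta = x'a_\theta/(1-x'a_x)$, which is nonzero in general. So $dx = g_\theta\,d\theta + g_{x'}\,dx'$, and the term $(b/a)\,dx$ feeds a $d\theta$-component back in. The form becomes $(1+(b/a)g_\theta)\,d\theta + (b/a)g_{x'}\,dx' + x'\,dt$, not $d\theta + c\,dx + x\,dt$; this already happens for $a=1+x\sin\theta$, $b=1$. Dividing by the new coefficient only reproduces the problem.

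Step 3 also has a slip: $d(sc\,dx) = s\,c_\theta\,d\theta\wedge dx \neq 0$, because $c$ depends on $\theta$. Hence $d\alpha_s = sc_\theta\,d\theta\wedge dx + dx\wedge dt$ and $\alpha_s\wedge d\alpha_s = (1+sxc_\theta)\,d\theta\wedge dx\wedge dt$, so $\alpha_s$ is contact only on a thin collar. Your $X_s$ then gives $\iota_{X_s}d\alpha_s = -(1+sxc_\theta)\,c\,dx$. The correct field is $X_s=(1+sxc_\theta)^{-1}c\,(\partial_t - x\,\partial_\theta)$. It still has no $\partial_x$-component, so your key observation survives. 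Finally, $\alpha_s(X_s)=0$, so there is no contact Hamiltonian to cut off. Instead cut off the vector field itself by a bump function of $x$; this is harmless precisely because the flow preserves $x$-levels, and the isotoped structure is $\phi_1^*\xi$.

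The simplest repair is to choose the collar direction in Step 1 along the characteristic foliation. Since $\beta|_{T\Gamma}\neq 0$, the line field $\ker\beta$ is transverse to $\Gamma$, so you can take $V\in\ker\beta$ with $du(V)=1$ and flow $\Gamma$ along $V$. In the resulting coordinates $u=x$ and $\beta(\partial_x)=0$, i.e. $\beta = a\,d\theta$ with $a>0$ near $\Gamma$. Hence $\alpha = a\,(d\theta + x'\,dt)$ with $x'=u/a$. Moreover $(\theta,x')$ are coordinates on a thin collar, since $\partial_x(u/a)=1/a$ on $\Gamma$. There is no $dx$-term left to transform, so you get the normal form directly, with no isotopy at all, and Step 3 becomes unnecessary.

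If you prefer to keep your Moser step, instead define $\theta$ on each level circle of $u$ by normalized integration of $\beta$, so that the $d\theta$-coefficient $a$ depends only on $x$. Then $x\mapsto x/a(x)$ creates no $d\theta$-term, and Step 3 with the corrected $d\alpha_s$ finishes the proof.
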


%
%

 
\section{Legendrian Reidemeister theorem for the convex surface projection}\label{proof}

In this section we prove the Legendrian Reidemeister Theorem for the convex surface projection. 



The following lemma tells us how to identify singularities along $\Gamma$ for the convex surface projection $\pi$ in terms of the front projection $\pi_F$.

\begin{figure}
    \begin{center}
    	\includegraphics[width=1.0\textwidth]{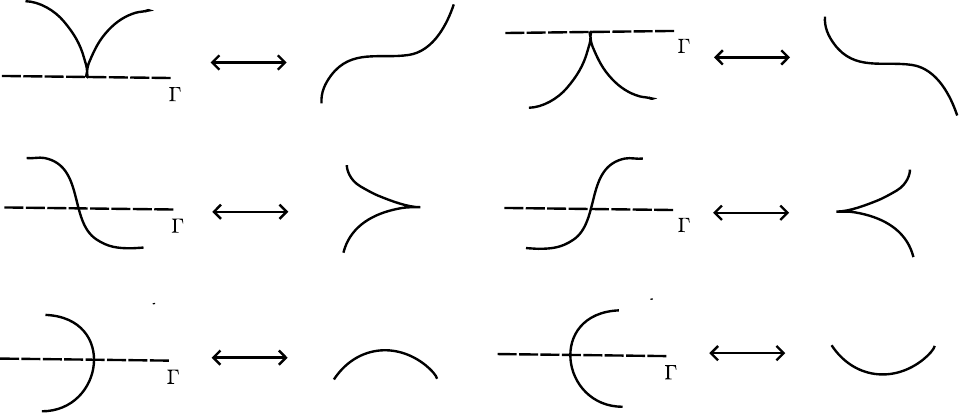}
    	\caption{Correspondence between singularities in $\pi$ and $\pi_F$}
    	\label{Key_lemma}
    \end{center}
\end{figure}

\begin{lemma}\label{keyLemma}
	Here we switch the notation from $(t, x, \theta)$ to $(x, y, z)$ so that the contact form is $dz + ydx$ and we view the front projection $\pi_F(\Lambda_t)$ as graphs of functions of $x$. Assume that $\Lambda_t$ is front generic.
	
	(a) Non-immersion points of $\pi|_{\Lambda_t}$ correspond to non-Morse (and non-cusp) critical points of $\pi_F(\Lambda_t)$. 
	
	(b) Transverse intersections of $\pi(\Lambda_t)$ with $\Gamma$ correspond to Morse critical points (including slope-$0$ cusp points) of $\pi_F(\Lambda_t)$.
	
	See Figure \ref{Key_lemma}.
\end{lemma}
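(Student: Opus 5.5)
We will work entirely inside the standard neighborhood of Lemma \ref{stdNbhd}, since both statements concern points of $\Lambda_t$ lying over $\Gamma$ or becoming singular there, and away from $N(\Gamma)$ there is nothing to check for (b). The first step is to translate coordinates carefully. In $N(\Gamma)=\Gamma_\theta\times[-\epsilon,\epsilon]_x\times\R_t$ with $\alpha=d\theta+x\,dt$, rename $t\mapsto x$, $x\mapsto y$, $\theta\mapsto z$, so that $\alpha=dz+y\,dx$. In these coordinates:
\begin{itemize}
\item the convex surface is locally the $(y,z)$-plane, and the convex surface projection forgets the $\R$-direction, i.e.\ $\pi(x,y,z)=(y,z)$;
\item the dividing curve is $\Gamma=\{y=0\}$, so $T\Gamma$ is spanned by $\partial_z$;
\item the front projection is $\pi_F(x,y,z)=(x,z)$.
\end{itemize}
Parametrize a strand of $\Lambda_t$ by $s\mapsto(x(s),y(s),z(s))$. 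The Legendrian condition is $z'=-y\,x'$. Consequently, wherever $x'\neq0$, the front is locally a graph $z(x)$ with
\[
\frac{dz}{dx}=-y, \qquad \frac{d^2z}{dx^2}=-\frac{y'}{x'}.
\]

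For (a), we examine when $\pi|_{\Lambda_t}$ fails to be an immersion, namely $y'(s)=z'(s)=0$. We split into cases according to whether $x'(s)$ vanishes.
\begin{itemize}
\item If $x'(s)=0$, then all three derivatives vanish, contradicting that $\Lambda_t$ is an immersed curve. So no non-immersion point of $\pi$ occurs at a cusp of the front.
\item If $x'(s)\neq0$, then $z'=-yx'=0$ forces $y=0$, so the front has slope $0$, i.e.\ a critical point of $z(x)$. Moreover $y'=0$ gives $d^2z/dx^2=0$, so the critical point is degenerate.
\end{itemize}
Conversely, at a non-cusp degenerate critical point we have $x'\neq0$, $y=0$ and $y'=0$, hence $z'=0$, so $\pi$ is not immersive. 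This gives the correspondence in (a).

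For (b), a point of $\pi(\Lambda_t)$ lies on $\Gamma$ exactly when $y(s)=0$. Transversality to $\Gamma$ means that $(y',z')$ is not a multiple of $\partial_z$, i.e.\ $y'(s)\neq0$. Note that $y'\neq0$ also makes $\pi$ immersive there. We again split into two cases.
\begin{itemize}
\item If $x'(s)\neq0$, then $y=0$ is a slope-$0$ critical point of the front graph. Transversality $y'\neq0$ is then equivalent to $d^2z/dx^2=-y'/x'\neq0$, i.e.\ the critical point is Morse.
\item If $x'(s)=0$, then front genericity says this is a simple cusp. The tangent slope of the front there is $-y=0$, so it is a slope-$0$ cusp. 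Since $x'=0$ and $z'=0$ automatically, $y'\neq0$ because the curve is immersed. Hence every slope-$0$ cusp gives a transverse intersection.
\end{itemize}
Conversely, any Morse critical point or slope-$0$ cusp has $y=0$ and $y'\neq0$, giving the correspondence.

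The main obstacle I anticipate is the bookkeeping at cusps. One must check that front genericity (simple cusps, $x'$ vanishing to first order) is consistent with the claims. One must also make sure that "Morse critical point" is interpreted via the graph $z(x)$ rather than via $s$. The key point is that a cusp is never a non-immersion point of $\pi$ because $y'\neq0$ there. Everything else is the one-line identity $d^2z/dx^2=-y'/x'$.
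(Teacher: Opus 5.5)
Your proposal is correct and follows essentially the same approach as the paper. Both arguments use the Legendrian relation $z'=-yx'$, split on whether $x'$ vanishes, and read off (non)degeneracy of the critical point from a second derivative. You use $d^2z/dx^2=-y'/x'$ where the paper uses the parametric $z''=-y'x'$ at $y=0$; these are equivalent. You also justify $x'\neq 0$ at a non-immersion point via immersedness of $\Lambda_t$, a step the paper leaves implicit.
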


\begin{proof}
	First we prove (a). Assume that $(x, y, z)$ is a non-immersion point, i.e., $y' = z' = 0$. Since $z' = -yx'$ and $x' \neq 0$, then $y = 0$. So \[z'' = -y'x' - yx'' = 0.\] Also $x' \neq 0$ implies $(x, y, z)$ is not a cusp point in the front projection. Conversely, assume $z' = z'' = 0$. Since $x' \neq 0$, then $y = 0$. So $z'' = -y'x'$ implies $y' = 0$ and $(x, y, z)$ is a non-immersion point.
	
	Next we prove (b). Indeed, in this case we have $z' = 0$ and $y' \neq 0$. If $x' = 0$, then $(x, y, z)$ is a cusp point in the front projection. Moreover, it has slope $0$ since $y = 0$ along $\Gamma$. Otherwise \[z'' = -y'x' \neq 0\] and $(x, y, z)$ is a Morse critical point. Conversely, assume $z' = 0$ and $z'' \neq 0$. Assume $y = 0$. Then $z'' = - y'x' - yx'' = -y'x'$ implies $y' \neq 0$ so we have a transverse intersection with $\Gamma$. Otherwise $y \neq 0$ so $x' = 0$ and $(x, y, z)$ is a cusp point. This is a contradiction since $y = 0$ at a slope-$0$ cusp point. 
\end{proof}

The following lemma perturbs $\Lambda_t$ so that it has only finitely many singularities.

\begin{lemma}\label{finite_sing_perturbation}
	There exists a Legendrian perturbation of $\Lambda_t$ such that: 
	
	(a) $\pi|_{\Lambda_t}$ is an immersion except that $\Lambda_t$ can have a cusp along $\Gamma$ for finitely many $t$.
	
	(b) $\pi(\Lambda_t)$ has only finitely many double points along $\Gamma$.
	
	(c) $\pi(\Lambda_t)$ has no triple points along $\Gamma$.
	
	(d) $\pi(\Lambda_t)$ has only finitely many double points away from $\Gamma$, all of which are tangent.
	
	(e) $\pi(\Lambda_t)$ has only finitely many triple points away from $\Gamma$, all of which are transverse.
	
	(f) $\pi(\Lambda_t)$ has no quadruple points away from $\Gamma$.
\end{lemma}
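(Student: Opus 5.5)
We perturb in two regimes and patch the results together. Inside the standard neighborhood $N(\Gamma)$ of Lemma~\ref{stdNbhd} we use the front-projection coordinates of Lemma~\ref{keyLemma}. Away from $\Gamma$, where $u\neq 0$, we treat $\pi$ as a Lagrangian-type projection.

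\emph{Near $\Gamma$.} On $N(\Gamma)$, with coordinates $(x,y,z)$ and contact form $dz+ydx$, $\Gamma=\{y=0\}$ and $\pi$ is the projection $(x,y,z)\mapsto(y,z)$. First apply Światkowski's argument $1$-parametrically: perturb $x(s,t)$ to a generic family with Morse critical points and finitely many births/deaths ($x=s^3+ts$), then re-lift using branch deformations. This makes $\Lambda_t$ front generic, so Lemma~\ref{keyLemma} applies.

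Non-immersion points of $\pi|_{\Lambda_t}$ are the points where $z'=z''=0$ and $x'\neq 0$, i.e. degenerate critical points of the front viewed as a graph over $x$. Requiring $z'=z''=0$ imposes two conditions on the two-dimensional $(s,t)$ family. A further small perturbation of the front (which automatically lifts to a Legendrian) therefore makes them isolated, occurring at finitely many $t$ with a generic $z\sim s^3$ profile. By the proof of Lemma~\ref{keyLemma}, $y=y'=0$ there, so these points lie on $\Gamma$, and $\pi(\Lambda_t)$ acquires the cusp of (a).

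Double points of $\pi$ on $\Gamma$ are pairs of strands with $y_1=y_2=0$ and $z_1=z_2$. That is three conditions on $(s_1,s_2,t)$, so generically finitely many occur, giving (b). A triple point on $\Gamma$ imposes five conditions on four parameters, so none occur, giving (c). Both are achieved by perturbing the front, i.e. the function $z$ along strands, independently on different strands via a parametric transversality argument.

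\emph{Away from $\Gamma$.} Where $u\neq0$, the form $\alpha=\beta+u\,dt$ can be rewritten as $dt+\beta/u$, so locally $\pi$ behaves exactly like the Lagrangian projection. As in the Lagrangian-projection proof, $\pi|_{\Lambda_t}$ is automatically an immersion there. We perturb the projected curves in $\Sigma\setminus N(\Gamma)$ and lift them by integrating $dt=-\beta/u$, closing up with a small bump (a branch deformation) placed in this region. Standard $1$-parameter multijet transversality then gives tangential double points at isolated times (d), transverse triple points at isolated times (e), and no quadruple points (f), which would need one more condition than available.

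\emph{Main obstacle.} The two perturbations must be compatible on the overlap collar of $N(\Gamma)$. In particular, the lift away from $\Gamma$ must close up without destroying the genericity already achieved near $\Gamma$. We handle this by doing the near-$\Gamma$ perturbation first and then using cutoff perturbations supported away from a smaller neighborhood of $\Gamma$. Since the conditions (a)--(f) are open and transversality is preserved by small perturbations, the first stage survives the second. Compactness of $[0,1]\times S^1$ gives the finiteness claims. The delicate point is making sure the bump and branch deformations needed for closure stay inside the region where $u\neq0$.
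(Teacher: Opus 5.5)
Your overall scheme is the paper's. You make the front generic in $N(\Gamma)$ and use Lemma~\ref{keyLemma} to turn non-immersion points and $\Gamma$-crossings into degenerate and nondegenerate critical points of the front. Your codimension counts ($2$ on $2$, $3$ on $3$, $5$ on $4$) are the paper's ($2$ on $2$, $4$ on $4$, $7$ on $6$) with the conditions $t_1=t_2(=t_3)$ eliminated. You then rescale by $1/u$ to a Lagrangian-type projection away from $\Gamma$ and patch by openness.

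\textbf{The gap.} You never establish (d)--(f) near $\Gamma$. Your second stage must vanish on a neighborhood $N'$ of $\Gamma$, since the primitive $\beta/u$ blows up there, so it gives (d)--(f) only outside a neighborhood of $\Gamma$. Your first stage, as described, only targets (a)--(c), which concern points \emph{on} $\Gamma$. Nothing in your argument rules out tangencies, triple points or a quadruple point of the original family lying in the punctured collar $N'\setminus\Gamma$. Your closing appeal to openness of ``(a)--(f)'' presupposes these were already arranged there.

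The collar is not an empty region either. Along $\Gamma$ one has $\xi=\ker dz$, so every strand of $\pi(\Lambda_t)$ crosses $\Gamma$ in the direction $\partial_y$. Two strands meeting on $\Gamma$ are therefore tangent there. At nearby times the pair of crossings they create or annihilate lies arbitrarily close to $\Gamma$: writing the strands as $z=c_i+a_iy^2+O(y^3)$, the crossings sit at $y\approx\pm\sqrt{(c_2-c_1)/(a_1-a_2)}$. Whether these are transverse is not decided by (a)--(c).

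\textbf{How the paper handles it, and a repair.} The paper requires the near-$\Gamma$ perturbation to make $\Lambda_t(\epsilon/2)$ satisfy \emph{all} of (a)--(f). Only then does it perturb $\Lambda_t-\Lambda_t(\epsilon/4)$ so that $\Lambda_t-\Lambda_t(\epsilon/2)$ is good. The two good regions then cover $\Lambda_t$, and openness is only needed on the overlap $\epsilon/4<|y|\le\epsilon/2$.

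You can repair your argument the same way. Since $y=-dz/dx$ is read off from the front, the following conditions in the $(y,z)$-plane are multijet conditions on the front:
\begin{itemize}
\item a tangential double point: $3$ conditions on $(s_1,s_2,t)$;
\item a triple point: $4$ conditions on $(s_1,s_2,s_3,t)$;
\item a quadruple point: $6$ conditions on $5$ parameters.
\end{itemize}
Add these to your stage-one transversality list.

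\textbf{A smaller point.} The claim ``a further small perturbation of the front (which automatically lifts)'' holds only for perturbations that remain fronts. A generic perturbation of a family of plane curves destroys the $1$-parameter families of cusps. The paper spends a paragraph on this, introducing its smoothed-cusp notion of front. You should at least specify a class of perturbations that stays within fronts, e.g.\ perturbing $z$ as a function of $x$ on branches, supported away from the front cusps.
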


\begin{proof}
    First we would like to clarify how we perturb $\pi_F(\Lambda_t)$. The problem with perturbation in the usual sense is that generically there cannot be a $1$-parameter family of cusps in an isotopy (cusps only appear as the bifurcation point of a Reidemeister I move). In \cite{Swiatkowski1992}, perturbation in the front projection of Legendrian knots in $\R^3$ is done using branch deformation and generalized branch deformation (i.e., branch deformation that moves the cusps while keeping the local ``shape'' of cusps fixed). Here since we would need to change the shape of cusps as well, we introduce an alternative definition of fronts, ``smoothing'' the cusps so that they are not destroyed by a generic perturbation in the usual sense. We begin with the following definition:

    \begin{figure}
        \begin{center}
        	\includegraphics[width=0.2\textwidth]{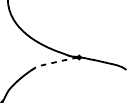}
        	\caption{Alternative definition of a front}
        	\label{front transversality}
        \end{center}
    \end{figure}

    \begin{definition}
        A \textit{front} in $\R^2$ is a set of (disconnected) smooth (without cusps) arcs together with a choice of finitely many points, such that local vertical reflections at the points give genuine fronts in the usual sense, with the chosen points being the cusp points of the front. See Figure \ref{front transversality}.
    \end{definition}

    With the above definition, we can do usual perturbation without killing the cusps (since that are smooth now), at the cost that an arbitrary perturbation may lead to a disconnected front, which we then close up using branch deformation. Later in the proof we will perturb $\pi_F(\Lambda_t)$ in this new sense.
    
	Now we give a sketch of the overall argument. Pick a neighborhood $N(\Gamma)$ of $\Gamma$ and isotope $\xi$ so that $\alpha$ satisfies the conditions of Lemma \ref{stdNbhd}. We use the notation $\Lambda_t(r)$ to denote $\Lambda_t \cap (\Gamma \times [-r, r] \times \R)$. In particular, $N(\Gamma) = \Lambda_t(\epsilon)$. First perturb $\Lambda_t(\epsilon)$ so that $\Lambda_t(\frac{3\epsilon}{4})$ has a good front projection. Then perturb $\Lambda_t(\frac{3\epsilon}{4})$ so that $\Lambda_t(\frac{\epsilon}{2})$ is good, i.e., (a)-(f) are all satisfied. Since by rescaling $\alpha$ by a positive function we can make $\alpha$ take the form $\beta \pm dt$ on $\Lambda_t - \Lambda_t(\frac{\epsilon}{4})$, we can perturb $\Lambda_t - \Lambda_t(\frac{\epsilon}{4})$ similarly as for the Lagrangian projection so that $\Lambda_t - \Lambda_t(\frac{\epsilon}{2})$ is good, i.e., (d)(e)(f) are satisfied. Now the entire $\Lambda_t$ satisfies (a)-(f). For the rest of the proof we justify that each of (a)-(f) can indeed be achieved using the Thom transversality theorem.
	
	Write $\Lambda_t$ as $f: \Lambda \times [0,1] \rightarrow N(\Gamma) \times \R$. The key is to perturb $\pi_F \circ f$ so that $j^1_s (\pi_F \circ f)$ is transverse to a list of submanifolds of $J^k_s(S^1 \times [0, 1], \R^2)$ for some $k, s$, one for each case from (a)-(f). These perturbations lift to Legendrian perturbations of $f$ since front diagrams always lift to Legendrians and perturbations of Lagrangian diagrams can always be made Legendrian by branch deformation or adding small bumps.
	
	By Cerf theory a generic $1$-parameter family of functions $\R \rightarrow \R$ is Morse except at finitely many points where a birth/death of a pair of Morse critical points occur. Indeed, the submanifold \[\{(s, t, \begin{pmatrix}
		x \\
		z \\
	\end{pmatrix}, \begin{pmatrix}
		x_{s} & x_{t} \\
		z_{s} & z_{t} \\
	\end{pmatrix}, \begin{pmatrix}
		x_{s^2} & x_{st} & x_{t^2} \\
		z_{s^2} & z_{st} & z_{t^2} \\
	\end{pmatrix}) : z_s = z_{s^2} = 0\} \subset J^2(S^1 \times [0,1], \R^2)\] has codimension $2$. So by a generic perturbation of the front projection we can achieve that $\pi_F(\Lambda_t)$ has finitely many non-Morse critical points, i.e. by Lemma \ref{keyLemma} $\pi(\Lambda_t)$ has finitely many non-immersion points. This proves (a). 
	
	Two transverse intersections with $\Gamma$ coincide exactly when they share the same $z$-coordinate. By Lemma \ref{keyLemma}, the relevant submanifold is \[\{(s_i, t_i, \begin{pmatrix}
		x_i \\
		z_i \\
	\end{pmatrix}, \begin{pmatrix}
		x_{s_i} & x_{t_i} \\
		z_{s_i} & z_{t_i} \\
	\end{pmatrix}, \begin{pmatrix}
		x_{s_i^2} & x_{s_i t_i} & x_{t_i^2} \\
		z_{s_i^2} & z_{s_i t_i} & z_{t_i^2} \\
	\end{pmatrix})_{i = 1, 2} \in J^2_2(S^1 \times [0,1], \R^2): \] \[t_1 = t_2, z_{s_1} = z_{s_2} = 0, z_{s_1^2} \neq 0, z_{s_2^2} \neq 0, z_1 = z_2\},\] which has codimension $4$. This proves (b).
	
	The condition that three critical points in $\pi_F(\Lambda_t)$ share the same $z$-coordinate corresponds to the submanifold \[\{(s_i, t_i, \begin{pmatrix}
		x_i \\
		z_i \\
	\end{pmatrix}, \begin{pmatrix}
		x_{s_i} & x_{t_i} \\
		z_{s_i} & z_{t_i} \\
	\end{pmatrix})_{i = 1}^3 \in J^1_3(S^1 \times [0, 1], \R^2) : t_1 = t_2 = t_3,\] \[z_{s_1} = z_{s_2} = z_{s_3} = 0, z_{s_1^2} \neq 0, z_{s_2^2} \neq 0, z_{s_3^2} \neq 0, z_1 = z_2 = z_3\},\] which has codimension $7 > 6$. This proves (c).
	
	(d)(e)(f) follow from the Legendrian Reidemeister theorem for the Lagrangian projection.
\end{proof}


The following lemma further perturbs $\Lambda_t$ so that each singularity gives a Reidemeister move and this finishes the proof of Theorem \ref{main}.

\begin{lemma}
	We can further perturb $\Lambda_t$ so that
	
	(a) Each non-immersion point along $\Gamma$ corresponds to a Reidemeister I move along $\Gamma$.
	
	(b) Each double point along $\Gamma$ corresponds to a Reidemeister IV move or a Reidemeister IV move followed by a Reidemeister II move.
	
	(c) Each double point away from $\Gamma$ corresponds to a Reidemeister II move.
	
	(d) Each triple point away from $\Gamma$ corresponds to a Reidemeister III move.
\end{lemma}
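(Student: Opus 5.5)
Starting from the isotopy produced by Lemma \ref{finite_sing_perturbation}, I will show that each of the finitely many singular times is a local bifurcation of the stated kind. Since the singular times are isolated, I can work one at a time in a small window $t\in(t_0-\delta,t_0+\delta)$ and in a small neighbourhood of the singular point. Outside that neighbourhood, $\pi(\Lambda_t)$ is good and changes only by planar isotopy. Near $\Gamma$ I will use the coordinates of Lemma \ref{stdNbhd}, written as $(x,y,z)$ with $\alpha=dz+ydx$ as in Lemma \ref{keyLemma}. Here $\pi$ is the projection to the $(y,z)$-plane and $\Gamma=\{y=0\}$. Away from $\Gamma$, I rescale $\alpha$ to $\beta\pm dt$, so $\pi$ behaves like a Lagrangian projection. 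Each local perturbation will be made in the front (or, away from $\Gamma$, in the Lagrangian diagram) by a further generic, $t$-dependent perturbation. It will then be lifted and closed up by branch deformation, exactly as in the proof of Lemma \ref{finite_sing_perturbation}.

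For (a), by Lemma \ref{keyLemma}(a) and the Cerf-theoretic codimension count already used, a non-immersion point is a birth/death of two Morse critical points of $z$ as a function of $x$ along the front. A further generic perturbation makes this a standard fold: locally $z=x^3-\tau x$ with $\tau=t-t_0$, so that $y=-z'(x)=-3x^2+\tau$. The curve $\pi(\Lambda_t)$ is then parametrized by $x\mapsto(-3x^2+\tau,\,x^3-\tau x)$. For $\tau<0$ this is an arc crossing $\Gamma$ zero times. For $\tau>0$ it crosses $\Gamma$ twice, at the two Morse points (Lemma \ref{keyLemma}(b)), and has one double point where $x=\pm\sqrt{\tau}$ give the same $z=0$. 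This is the standard cuspidal curve $y^3\sim z^2$ and its unfolding, i.e.\ a Reidemeister I move with bifurcation point on $\Gamma$. I will read off the crossing sign from the $x$-coordinates: the strand with larger $x$ lies over, and this should reproduce the sign in Figure \ref{Reidemeister}, consistent with $\tb$ invariance.

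For (c) and (d) I will invoke the standard smooth genericity argument from the Lagrangian-projection theorem. A tangency of order two with nonzero relative velocity gives a Reidemeister II move. A transverse triple point passed through with nonzero speed gives a Reidemeister III move. Both conditions are open and dense and can be imposed with additional jet-transversality conditions. Reidemeister I cannot occur off $\Gamma$, by the argument of the Lagrangian case.

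The main obstacle is (b). A double point on $\Gamma$ corresponds to two Morse critical points of the front (Lemma \ref{keyLemma}(b)) with equal $z$-value at time $t_0$. I will impose that $\frac{d}{dt}(z_1-z_2)\neq 0$, a codimension-one genericity condition. Then in the $(y,z)$-plane the two strands crossing $\Gamma$ transversally have their $\Gamma$-intersection heights pass through each other. If the two strands are mutually transverse at the double point, this is exactly Reidemeister IV, since the crossing slides across $\Gamma$. The subtle case is when the two strands are tangent at that point, i.e.\ they have equal slopes $dz/dy=-1/x''$ ratio there. I expect I cannot remove this case by perturbation without losing something. I would then show that it is an extra codimension-one condition which, combined with the equal-height condition, can generically be split. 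A small perturbation separates the times, so the tangency occurs just before or after, giving a Reidemeister II next to a Reidemeister IV. This is move (e). The delicate point I anticipate is making this splitting compatible with the Legendrian lift near $\Gamma$, so that the II move happens on the correct side. I would handle it by writing both strands as graphs $z=z_i(x)$ with explicit quadratic normal forms and perturbing the coefficients.
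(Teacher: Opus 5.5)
Your treatment of (a), (c) and (d) is fine and agrees with the paper. In particular, your model $z=x^3-\tau x$, which gives $\pi(\Lambda_t)$ as $x\mapsto(-3x^2+\tau,\,x^3-\tau x)$, is the explicit Reidemeister I model the paper refers to.

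The gap is in (b), where your case split rests on a false premise. Along $\Gamma=\{y=0\}$ the contact plane is $\ker dz$, so the Legendrian condition $z'=-yx'$ forces $z'=0$ wherever $\pi(\Lambda_t)$ meets $\Gamma$. The projected curve therefore always crosses $\Gamma$ with tangent vector $(y',0)$. Consequently, at any double point on $\Gamma$ the two strands are automatically tangent to each other: both are tangent to $\partial_y$ there. Your expression $-1/x''$ is not a slope; for a strand through a front extremum the slope is $dz/dy=z_x/(-z_{xx})=0$. This has three consequences:
\begin{itemize}
\item The ``mutually transverse'' case, which you assign to the pure IV move, never occurs.
\item The tangency is not an extra codimension-one condition that a perturbation can push to a time different from the equal-height time. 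It holds identically, so your mechanism for producing move (e) does not work.
\item You are left with no mechanism that produces a pure IV move at all.
\end{itemize}
This is also why a good projection only permits transverse double points off $\Gamma$.

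What is missing is the second-order local model near $\Gamma$; parametrize each strand by $y$ there.
\begin{itemize}
\item At a front extremum with $a_i=z_{xx}\neq 0$, the strand is $z=z_i(t)+y^2/(2a_i)+O(y^3)$.
\item At a slope-$0$ cusp, writing $x=x_0+\tfrac{c}{2}y^2+\cdots$ gives $z=z_0-\tfrac{c}{3}y^3+\cdots$, an inflection. Lemma \ref{keyLemma}(b) explicitly includes this case, and you dropped it when you said both points are Morse critical points.
\end{itemize}
For two extrema with $a_1\neq a_2$ (the generic situation), the nearby crossings solve $z_1-z_2=\bigl(\tfrac{1}{2a_2}-\tfrac{1}{2a_1}\bigr)y^2+O(y^3)$. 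As $z_1-z_2$ changes sign, a pair of crossings is born or dies symmetrically about $\Gamma$. This is move (e): a II move whose bifurcation point lies on $\Gamma$.

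For two cusps, $z_1-z_2=\tfrac{c_1-c_2}{3}y^3+\cdots$ has a single root, which passes from one side of $\Gamma$ to the other. This is the pure IV move.

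This is the paper's dichotomy (left/right cusps versus local minima/maxima in the front), and it should replace your transverse/tangent split. If by ``equal slopes'' you actually meant equal curvatures $1/a_1=1/a_2$, that is indeed an extra condition you may perturb away. But then the generic outcome at a pair of extrema is move (e), not a IV move.
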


\begin{figure}
    \begin{center}
    	\includegraphics[width=1.0\textwidth]{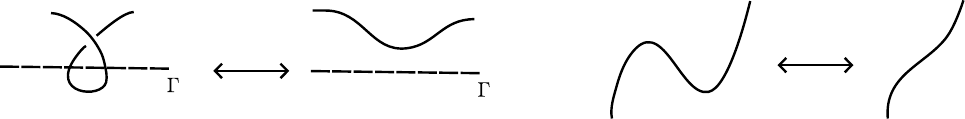}
    	\caption{Reidemeister I move}
    	\label{Reidemeister I}
    \end{center}
\end{figure}

\begin{figure}
    \begin{center}
    	\includegraphics[width=1.0\textwidth]{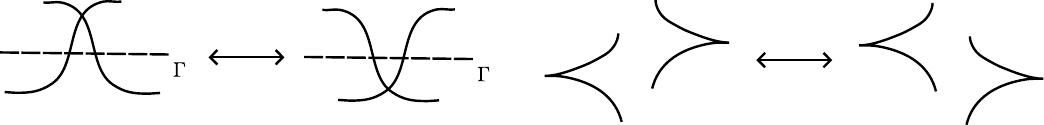}
    	\caption{An Example of a Reidemeister IV move}
    	\label{Reidemeister IV_2}
    \end{center}
\end{figure}

\begin{figure}
    \begin{center}
    	\includegraphics[width=1.0\textwidth]{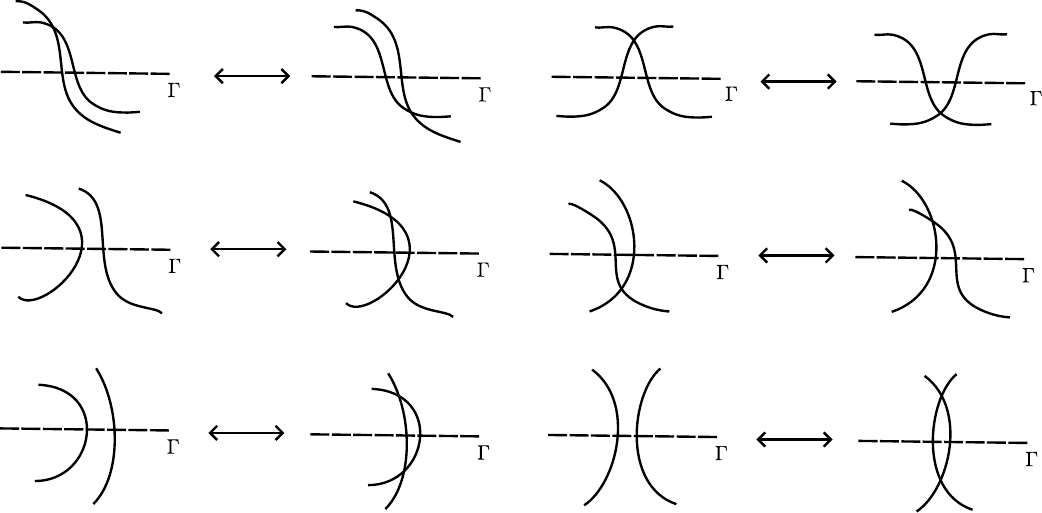}
    	\caption{Reidemeister IV moves and composition of Reidemeister II and IV moves}
    	\label{Reidemeister IV}
    \end{center}
\end{figure}

\begin{proof}
	For (a), genericity of the non-Morse critical points will guarantee genericity of the non-immersion points, i.e., they are simple cusps near which a standard Reidemeister I move occurs. See Figure \ref{Reidemeister I}. By computing an explicit model one can check that the sign of the crossing has to be the one drawn in the figure. For (b), the two possibilities mentioned in the statement depend on whether the two intersections with $\Gamma$ are (in the front projection) left/right cusps or local minima/maxima. See Figure \ref{Reidemeister IV_2} for an example and \ref{Reidemeister IV} for all possible moves corresponding to a double point along $\Gamma$. (c) and (d) follow from the Legendrian Reidemeister theorem for the Lagrangian projection.
\end{proof}

%

\printbibliography

\end{document}